\documentclass{article} 

\usepackage{amsmath,amsthm,amssymb,amsfonts,graphicx}

\usepackage{geometry} % to change the page dimensions
\geometry{a4paper} % or letterpaper (US) or a5paper or....

\newtheorem{Theorem}{Theorem}[section]
\theoremstyle{plain}

\newtheorem{Lemma}[Theorem]{Lemma}

\newtheorem{Remark}[Theorem]{Remark}
\newtheorem{Corollary}[Theorem]{Corollary}
\newtheorem{Proposition}[Theorem]{Proposition}

\numberwithin{equation}{section}

%====================================================================================================
%====================================================================================================
%====================================================================================================
\title{Limiting Distribution of the Rightmost Particle in Catalytic Branching Brownian Motion.}
\author{Sergey Bocharov\footnote{Department of Mathematics, Zhejiang University, China} \ and Simon C. Harris\footnote{Department of Mathematical 
Sciences, Bath University, UK}}

\begin{document}

\maketitle

\begin{abstract}
We study the model of binary branching Brownian motion with spatially-inhomogeneous branching rate 
$\beta \delta_0(\cdot)$, where $\delta_0(\cdot)$ is the Dirac delta function and $\beta$ is some positive constant. We show that 
the distribution of the rightmost particle centred about $\frac{\beta}{2}t$ converges to a mixture of Gumbel distributions according to a martingale limit. 
Our results form a natural extension to S. Lalley and T. Sellke \cite{6} for the degenerate case of catalytic branching.
\end{abstract}

\section{Introduction and Main Results.}

\subsection{Model}
In this article we consider the model of branching Brownian motion with binary splitting and spatially inhomogeneous 
branching rate $\beta \delta_0(\cdot)$, where $\delta_0(\cdot)$ is the Dirac delta function and $\beta > 0$ is some constant.

In such a model we start with a single particle whose path $(X_t)_{t \geq 0}$ is distributed like a standard Brownian motion. Then, 
at a random time $T$ (the branching time) satisfying
\[
\mathbb{P} \Big( T > t \ \big\vert \ (X_s)_{s \geq 0} \Big) = \mathrm{e}^{- \beta L_t} \text{,}
\]
where $(L_t)_{t \geq 0}$ is the local time at $0$ of $(X_t)_{t \geq 0}$, the initial particle dies and is replaced with two new 
particles, which independently repeat the behaviour of their parent (that is, they move as Brownian motions until their split times when new 
particle emerge, etc.)

Informally, we can write $L_t = \int_0^t \delta_0(X_s) \mathrm{d}s$ thus justifying calling the branching rate $\beta \delta_0(\cdot)$. Also, 
the branching in this model can only take place at the origin since $(L_t)_{t \geq 0}$ only grows on the zero set of $(X_t)_{t \geq 0}$ and stays 
constant elsewhere.

\subsection{Main Result}
Before we state the main result of this article (Theorem \ref{main}) let us define the notation and recall some of the existing results for this catalytic model 
in \cite{1}.

Let us denote by $P$ the probability measure associated to the branching process with $E$ the corresponding expectation. 
We denote the set of all the particles in the system at time $t$ by $N_t$. For every particle $u \in N_t$ we denote by $X^u_t$ its 
spatial position at time $t$. Finally, we define 
\[
R_t := \sup_{u \in N_t} X^u_t \qquad , \ t \geq 0
\]
to be the rightmost particle.

Previously in \cite{1} we have shown that 
\begin{equation}
\label{as_limit}
\frac{R_t}{t} \to \frac{\beta}{2} \qquad P \text{-a.s. } \quad \text{ as } t \to \infty
\end{equation}
The aim of this paper is to prove that $R_t - \frac{\beta}{2}t$ converges in distribution to a non-trivial limit and to describe the limiting 
distribution.

Let us recall from \cite{1} that the process 
\[
M_t := \mathrm{e}^{-\frac{\beta^2}{2}t} \sum_{u \in N_t} \mathrm{e}^{- \beta |X^u_t|} \qquad , \ t \geq 0
\]
is a $P$-martingale of mean $1$ that converges almost surely to a strictly positive limit, which we denote by $M_{\infty}$.

We are now in the position to state our main result.
\begin{Theorem}
\label{main}
For a branching process initiated from $x \in \mathbb{R}$ and any $y \in \mathbb{R}$ we have 
\begin{equation}
\label{main_eq}
\lim_{t \to \infty} P^{x} \Big( R_t \leq \frac{\beta}{2}t + y \Big) = E^{x} \exp \Big\{ -M_{\infty} \mathrm{e}^{- \beta y}\Big\}
\end{equation}
\end{Theorem}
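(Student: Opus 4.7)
The strategy I would follow is a catalytic analogue of the classical Lalley--Sellke argument. Heuristically, for $t \gg s \gg 1$ the event $\{R_t > \tfrac{\beta}{2}t+y\}$ is driven by rare tail events in the independent sub-processes issuing from the particles alive at time $s$, and the Laplace functional of those tail events can be read off from the martingale $M_s$. Concretely, the branching Markov property at a deterministic time $s$ gives
\[
P^x\Big(R_t \leq \tfrac{\beta}{2}t + y \,\Big|\, \F_s\Big) = \prod_{u \in N_s}\Big(1 - P^{X^u_s}\Big(R_{t-s} > \tfrac{\beta}{2}(t-s) + y + \tfrac{\beta}{2}s\Big)\Big),
\]
and the plan is to let $t \to \infty$ first and then $s \to \infty$.

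The crucial input is a sharp tail estimate: for a starting point $x$ and $z$ large,
\[
P^x\big(R_t > \tfrac{\beta}{2}t + z\big) = (1+o(1))\,\E^{-\beta|x|}\,\E^{-\beta z} \qquad \text{as } t \to \infty,
\]
with an error controllable in $x$. The upper bound I would get from Markov's inequality combined with the many-to-one identity
\[
E^x\Big[\#\big\{u \in N_t : X^u_t > \tfrac{\beta}{2}t+z\big\}\Big] = E^x_{BM}\Big[\Ind\{B_t > \tfrac{\beta}{2}t+z\}\,\E^{\beta L_t}\Big],
\]
the right-hand side being computable either from the explicit joint law of $(B_t, L_t)$ or via the Tanaka-based exponential martingale $\E^{\beta|B_t|-\beta L_t-\frac{\beta^2}{2}t}$ and a Girsanov tilt, which should produce precisely the claimed asymptotic (notice that the tilt shifts $|B_t|$ to have drift $\beta$, so $B_t$ naturally lives near $\pm\frac{\beta}{2}t$, matching the centering). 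For the matching lower bound I would use a second-moment / ``many-to-few'' spine truncation to show that typically only $O(1)$ particles cross the level $\tfrac{\beta}{2}t+z$, so that the first-moment bound is tight up to constants.

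Once the tail asymptotic is available, each factor in the product above becomes $1-(1+o(1))\,\E^{-\beta|X^u_s|}\E^{-\beta^2 s/2}\E^{-\beta y}$ as $t\to\infty$, and hence
\[
\lim_{t\to\infty} P^x\Big(R_t \leq \tfrac{\beta}{2}t+y \,\Big|\, \F_s\Big) = \exp\Big(-\E^{-\beta y}\cdot\E^{-\beta^2 s/2}\sum_{u \in N_s}\E^{-\beta|X^u_s|}\Big) = \exp(-M_s\,\E^{-\beta y}).
\]
Sending $s\to\infty$, the a.s.\ convergence $M_s \to M_\infty$ from \cite{1} together with dominated convergence on the outer $E^x$ (the integrand is bounded by $1$) then yields the claimed formula $E^x\exp(-M_\infty\,\E^{-\beta y})$.

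The main obstacle is the \emph{uniformity} of the tail estimate when inserted into the product over $u \in N_s$: the positions $X^u_s$ are random and spread over $\R$, so one needs the $(1+o(1))$ error to be controllable as $x$ ranges over the support of this cloud, weighted by the mass $\E^{-\beta|X^u_s|}$ carried in $M_s$. This is the technical heart of Bramson-type convergence proofs; here it is further complicated by the explicit $\E^{-\beta|x|}$ prefactor arising from the lack of translation invariance, so the Feynman--Kac expectation involving the local time must be analysed carefully in its $x$-dependence. Justifying the interchange of limits $t\to\infty$ and $s\to\infty$, and the passage from ``conditional on $\F_s$'' to the unconditional statement, are the places where the argument has to be made rigorous.
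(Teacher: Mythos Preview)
Your strategy matches the paper's in outline---branching property, Markov upper bound via many-to-one, Paley--Zygmund lower bound via many-to-two, then read off $M_s$ from the product---but the two-step limit ``$t\to\infty$ first, then $s\to\infty$'' does not work as stated. The second-moment method does \emph{not} yield $P^x(R_t>\tfrac\beta2 t+z)=(1+o(1))\E^{-\beta|x|}\E^{-\beta z}$ for fixed $z$ as $t\to\infty$: Paley--Zygmund only gives a lower bound of the form $\E^{-\beta z}/(1+C\E^{-\beta z})$, which is off by a nontrivial multiplicative constant when $z$ is fixed. (Indeed, the true limit is $1-E^x\exp(-M_\infty\E^{-\beta z})$, not $\E^{-\beta|x|-\beta z}$; these agree only to first order as $z\to\infty$.) You even write ``tight up to constants'' and then immediately claim a $(1+o(1))$---that is the gap.

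The paper repairs this by taking a \emph{single} limit with $s=s(t)=\sqrt{\log t}$ growing with $t$. Then the effective level seen by each sub-process is $z(t)=\tfrac\beta2 s(t)+y\to\infty$, so the second-moment error term $C\E^{-\beta z(t)}$ vanishes and the Paley--Zygmund bound becomes asymptotically sharp. The choice $s(t)=\sqrt{\log t}$ is calibrated so that simultaneously (i) $z(t)\to\infty$ to kill the constant in the lower bound, (ii) $z(t)=o(\log t)$ and the particle positions $|X^u_{s(t)}|\le(\tfrac\beta2+\epsilon)s(t)\ll\log t$ so that the $x$-dependent error in Proposition~\ref{prop_main} (which handles the hitting-time delay to the origin) is uniform over the cloud, and (iii) $s(t)\to\infty$ so that $M_{s(t)}\to M_\infty$. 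This joint-limit device is exactly what resolves the uniformity issue you correctly flagged as the main obstacle.
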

The limiting distribution is thus an average over a family of Gumbel distributions with scale parameter $\beta^{-1}$ and random location $- \beta^{-1} \log M_{\infty}$. 

\subsection{Comparison with other branching Brownian motion models}
A similar formula for branching Brownian motion with spatially-homogeneous branching rate $\beta$ was proved by S. Lalley and T. Sellke in \cite{5}. 
Another similar formula for a general class of branching random walks  in discrete time with spatially-homogeneous branching rate was recently obtained by 
M. Bramson, J. Ding and O. Zeitouni in \cite{2}. 
However of particular relevance to our result is the following theorem due to Lalley and Sellke, which covers a certain class of spatially-inhomogeneous 
branching rates $\beta(x)$, not including the degenerate catalytic case $\beta \delta_0(x)$.
\begin{Theorem}[S. Lalley, T. Sellke, \cite{6}]
\label{Lalley_Sellke}
Consider a binary branching Brownian motion with branching rate $\beta(x)$, where $\beta(\cdot)$ is a continuous function 
such that $\beta(x) \to 0$ as $|x| \to \infty$ and $\int_{- \infty}^{\infty} \beta(x) \mathrm{d}x < \infty$. Let $\lambda_0$ be the largest positive 
eigenvalue of the differential operator $\mathcal{L}: g \mapsto \frac{1}{2}g'' + \beta g$ with the corresponding unique eigenfunction $\varphi_0(\cdot)$, 
normalised so that $\varphi_0(0)=1$. 
Then 
\[
\lim_{t \to \infty} P \Big( R_t \leq \sqrt{\frac{\lambda_0}{2}}t + y \Big) = E \exp \Big\{ 
- Z_{\infty} \gamma \mathrm{e}^{- \sqrt{2 \lambda_0} y} \Big\} \text{ ,}
\]
where $Z_{\infty}$ is the almost sure limit of the martingale $Z_t = \mathrm{e}^{- \lambda_0 t} 
\sum_{u \in N_t} \varphi_0(X^u_t)$, $t \geq 0$, \newline $\gamma = \frac{1}{2 \lambda_0} \int_{- \infty}^{\infty} \mathrm{e}^{\sqrt{2 \lambda_0} x} \beta(x) \nu(\mathrm{d}x)$ 
and $\nu(J) = \int_J \varphi_0(x) \mathrm{d}x$.
\end{Theorem}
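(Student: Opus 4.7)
My plan follows the Lalley-Sellke template for Bramson's F-KPP equation, adapted to the spectral structure supplied by the eigenpair $(\lambda_0,\varphi_0)$. The backbone has three ingredients: (i) convergence of the additive martingale $Z_t$ to a strictly positive limit $Z_\infty$, (ii) tightness of $R_t - m_t$ about $m_t:=\sqrt{\lambda_0/2}\,t$, and (iii) identification of the limit distribution as the Laplace functional of a mixed Poisson point process with intensity $Z_\infty \gamma\,e^{-\sqrt{2\lambda_0}y}\,dy$.

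\emph{Step 1: Martingale limit.} First I would show that $Z_t$ is uniformly integrable and that $Z_\infty>0$ on non-extinction. The standard tool is a spine change of measure: under $d\tilde P/dP|_{\F_t}=Z_t/\varphi_0(x)$, the genealogy acquires a distinguished spine whose motion is the $h$-transform of Brownian motion with $h=\varphi_0$ (drift $\varphi_0'/\varphi_0$), branching at the accelerated rate $2\beta(\cdot)$ along its path. Because $\beta\in L^1$ with $\beta\to 0$ at infinity and $\varphi_0\sim c_\pm e^{-\sqrt{2\lambda_0}|x|}$ at infinity (from $\mathcal L\varphi_0=\lambda_0\varphi_0$ away from the support of $\beta$), the spine is transient and leaves the active region, so the standard spine UI criterion yields $E[Z_\infty]=\varphi_0(x)$ with $Z_\infty>0$ a.s.\ conditional on non-extinction.

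\emph{Step 2: Tightness.} The upper tail is controlled by a many-to-one first-moment bound: for any $y$,
\begin{equation*}
E^x\#\{u\in N_t:X^u_t>m_t+y\}=E^x\bigl[e^{\int_0^t\beta(\xi_s)\,ds}\,\Ind_{\xi_t>m_t+y}\bigr],
\end{equation*}
with $\xi$ a standard Brownian motion from $x$. Feynman-Kac combined with a spectral expansion along $\lambda_0$ and the Gaussian tail of $\xi_t$ yields the asymptotics $\asymp\varphi_0(x)\,\gamma\,e^{-\sqrt{2\lambda_0}y}$, which vanishes as $y\to\infty$ and identifies the constant $\gamma=\frac{1}{2\lambda_0}\int e^{\sqrt{2\lambda_0}x}\beta(x)\varphi_0(x)\,dx$. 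The lower tail uses a second-moment/branching argument: restrict $Z_t$ to particles in a window $[m_t-y,m_t+y]$ and apply Paley-Zygmund together with the independence of subtrees to ensure the front is close to $m_t$ with probability $\to 1$ as $y\to\infty$.

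\emph{Step 3: Identification.} With tightness, let $\psi(y,x)$ be any subsequential limit of $P^x(R_t\le m_t+y)$. The branching Markov property at time $s$ gives
\begin{equation*}
P^x(R_t\le m_t+y)=E^x\prod_{u\in N_s}P^{X^u_s}\bigl(R_{t-s}\le m_{t-s}+(y+m_t-m_{t-s})\bigr),
\end{equation*}
and letting $t\to\infty$ produces the functional equation $\psi(y,x)=E^x\prod_{u\in N_s}\psi(y+\sqrt{\lambda_0/2}\,s,X^u_s)$. Combining this with the sharp exponential tail $1-\psi(y,x)\sim\varphi_0(x)\gamma e^{-\sqrt{2\lambda_0}y}$ from Step~2, and then passing $s\to\infty$ inside the product (using $e^{-\lambda_0 s}\sum_{u\in N_s}\varphi_0(X^u_s)\to Z_\infty$), forces $\psi(y,x)=E^x\exp\bigl\{-Z_\infty\gamma e^{-\sqrt{2\lambda_0}y}\bigr\}$. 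Uniqueness of this limit then upgrades subsequential convergence to full convergence.

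The main obstacle is Step~3: rigorously establishing the sharp asymptotics $1-\psi(y,x)\sim\varphi_0(x)\gamma e^{-\sqrt{2\lambda_0}y}$ and justifying the $s\to\infty$ exchange inside the random product. In the homogeneous case Lalley-Sellke exploit translation invariance; here that must be replaced by the spectral identity $\mathcal L\varphi_0=\lambda_0\varphi_0$ and a delicate traveling-frame analysis of the F-KPP-type equation $\partial_t(1-u)=\tfrac12(1-u)''+\beta(x)\bigl((1-u)-(1-u)^2\bigr)$. Pinning down that the unique solution with exponential decay rate $\sqrt{2\lambda_0}$ carries precisely the prefactor $\varphi_0(x)\gamma$ is the crucial analytic input, and it is exactly this step that calibrates the multiplicative constant in the Gumbel mixture.
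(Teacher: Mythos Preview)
The paper does not prove this theorem. Theorem~\ref{Lalley_Sellke} is quoted from Lalley and Sellke \cite{6} purely for comparison with the paper's own main result (Theorem~\ref{main} on the catalytic case $\beta\delta_0$); no proof is given here. The only information the paper provides about the original argument is the remark immediately following the statement: ``The proof of Lalley and Sellke of Theorem~\ref{Lalley_Sellke} is based on stochastic comparison of the branching process with a Poisson tidal wave and involves a coupling argument.'' So there is no ``paper's own proof'' to compare against.

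That said, your outline is not the Lalley--Sellke route either: they do not argue via the functional equation you write in Step~3, but rather construct an explicit Poisson process of ``immigrants'' and couple it to the particle system. Your sketch is in spirit closer to what the present paper does for its Theorem~\ref{main} (first- and second-moment bounds via many-to-one/two, then the branching Markov property at an intermediate time $s$, then $s\to\infty$ using the additive martingale).

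One concrete slip in your Step~1: under the $\varphi_0$-tilt the spine is \emph{not} transient. Since $\varphi_0(x)\sim c_\pm e^{-\sqrt{2\lambda_0}|x|}$, the drift $\varphi_0'/\varphi_0$ is asymptotically $-\sqrt{2\lambda_0}\,\mathrm{sign}(x)$, i.e.\ it points toward the origin, and the spine is positive recurrent with invariant density proportional to $\varphi_0^2$. Uniform integrability of $Z_t$ in this setting comes from that ergodicity (the spine spends a controlled amount of time in the branching region), not from the spine escaping it.
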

The proof of Lalley and Sellke of Theorem \ref{Lalley_Sellke} is based on stochastic comparison of the branching process with a Poisson tidal wave 
and involves a coupling argument. Rather than trying to adapt their proof to suit our model we take an alternative and more direct approach which can be summarised as follows. 

In Section 2 we establish a formula for second moments of quantities of the form $\sum_{u \in N_t} f(X^u_t)$, which in itself is an 
interesting and useful result. We then use this formula to give a lower bound on $P(R_t > \frac{\beta}{2}t + y)$ 
via the Paley-Zygmund inequality. The corresponding upper bound trivially follows from the Markov inequality. 

In Section 3, we can then show that if $|x_0(t)|$ is not too large and $z(t)$ goes to infinity not too fast, then 
\begin{equation}
\label{estimate}
P^{x_0(t)} \Big( R_t \leq \frac{\beta}{2}t + z(t) \Big) \approx 1 - \mathrm{e}^{- \beta |x_0(t)| - \beta z(t)} \text{ ,}
\end{equation}
this being made precise in Proposition \ref{prop_main}. We then use \eqref{estimate} in the identity 
\[
P\Big( R_t \leq \frac{\beta}{2}t + y\Big) = E \Big[ \prod_{u \in N_s} P^{X^u_s} \Big( R_{t - s} \leq \frac{\beta}{2}t + y \Big) \Big] 
\]
to get the main result.

\section{First and second moments computations.}
For $\lambda \in \mathbb{R}$ and $t \geq 0$ let us define 
\[
N_t^{\lambda} := \{ u \in N_t \ : \ X^u_t \geq \lambda\}
\]
to  be the set of particles at time $t$ which lie above level $\lambda$. In this section we are going to study the asymptotic properties of the first two moments of $|N_t^{\lambda}|$ for 
$\lambda = \frac{\beta}{2}t + y$.

\subsection{'Many-to-One' lemma and applications}
Let us extend the branching process by introducing an infinite line of descent (a sequence of particles) which we call the spine and which is chosen uniformly 
at random from all the possible lines of descent. More precisely, the spine starts with the initial particle of the branching process. It continues with one 
of the children of the initial particle chosen with probability $\frac{1}{2}$, then with one of the chosen child's child with probability $\frac{1}{2}$ and so on. 

We let $\tilde{P}$ be the extension of the probability measure $P$ so that the branching process under $\tilde{P}$ is defined together 
with the spine as described above. We denote the expectation associated to $\tilde{P}$ by $\tilde{E}$. We also let $\xi_t$ denote the position of the 
particle in the spine at time $t$. It is not hard to see that $(\xi_t)_{t \geq 0}$ is a Brownian Motion under 
$\tilde{P}$. We let $(\tilde{L}_t)_{t \geq 0}$ be the local time at $0$ of $(\xi_t)_{t \geq 0}$.

Recall a special case of the 'Many-to-One' Lemma, as was used extensively in \cite{1} .
\begin{Lemma}['Many-to-One' Lemma]
\label{many_one}
Suppose that $f(\cdot) : \mathbb{R} \to \mathbb{R}$ is a non-negative measurable function. Then 
\[
E \sum_{u \in N_t} f(X^u_t) = \tilde{E} \Big[ f(\xi_t) \mathrm{e}^{\beta \tilde{L}_t} \Big]  \text{.}
\]
\end{Lemma}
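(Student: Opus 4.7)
The plan is to read off the many-to-one identity directly from the construction of the spine measure $\tilde{P}$, combining a combinatorial identity for the conditional spine distribution with a Cox-process (i.e.\ generating-function) calculation along the spine.

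First, the definition of $\tilde{P}$ gives, for each particle $u \in N_t$,
\[
\tilde{P}\bigl( \xi_t = u \,\big|\, \F_t \bigr) = 2^{-n_t(u)} \text{ ,}
\]
where $\F_t$ is the $\sigma$-algebra of the (unmarked) branching tree up to time $t$ and $n_t(u)$ is the number of fissions on the ancestral line of $u$. Multiplying by the compensating factor $2^{n_t(u)}$ one obtains
\[
\sum_{u \in N_t} f(X^u_t) = \tilde{E}\bigl[ f(\xi_t)\, 2^{n_t(\xi)} \,\big|\, \F_t \bigr] \text{ ,}
\]
and, since $\tilde{P}$ restricted to $\F_t$ agrees with $P$, taking outer expectation reduces the lemma to evaluating $\tilde{E}[f(\xi_t)\,2^{n_t(\xi)}]$.

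For this I would condition on the spine trajectory $(\xi_s)_{s \leq t}$. Under $\tilde{P}$ the spine performs a standard Brownian motion (a concatenation of Brownian segments picked uniformly at each branch), and, by the definition of the catalytic branching mechanism, the fissions on the spine form a Cox point process with random intensity measure $\beta\,\mathrm{d}\tilde{L}_s$. Hence $n_t(\xi)$ is conditionally $\mathrm{Poisson}(\beta \tilde{L}_t)$, and the Poisson probability-generating function at $z=2$ yields
\[
\tilde{E}\bigl[ 2^{n_t(\xi)} \,\big|\, (\xi_s)_{s \leq t} \bigr] = \mathrm{e}^{\beta \tilde{L}_t} \text{ .}
\]
Substituting into the previous display and applying the tower property gives the claim.

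The only non-routine step is justifying the Cox-process description of fissions along the spine in this degenerate setting, where the branching rate $\beta \delta_0$ is a distribution rather than a function. The cleanest way around this is either to build $\tilde{P}$ by an explicit inductive construction over the successive spine-branching times — each inter-branch segment being an independent Brownian motion killed at local-time rate $\beta$ and then fissioning uniformly — or to smooth $\delta_0$ to $\tfrac{1}{2\eps}\Ind_{[-\eps,\eps]}$, apply the elementary many-to-one formula in the non-singular model, and pass to $\eps \downarrow 0$ using the occupation-density characterisation of Brownian local time.
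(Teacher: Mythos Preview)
The paper does not prove this lemma at all: it is stated as a recollection of a known result from \cite{1} (``Recall a special case of the `Many-to-One' Lemma, as was used extensively in \cite{1}''), so there is no original proof to compare against. Your sketch is the standard spine argument for many-to-one identities and is essentially correct for this model: the uniform-choice description of $\tilde{P}$ gives $\tilde{P}(\xi_t = u \mid \F_t) = 2^{-n_t(u)}$, the compensating-factor step yields $\sum_{u \in N_t} f(X^u_t) = \tilde{E}[f(\xi_t)\,2^{n_t(\xi)} \mid \F_t]$, and conditioning on the spine path together with the Poisson generating function at $z=2$ converts $2^{n_t(\xi)}$ into $\mathrm{e}^{\beta \tilde{L}_t}$. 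Your remark about justifying the Cox-process description in the singular case is apt; in the catalytic model the branching times along any fixed path are \emph{defined} via the local-time clock, so the inductive construction you mention is exactly how $\tilde{P}$ is built, making the Poisson-with-mean-$\beta \tilde{L}_t$ description immediate without needing the smoothing limit.
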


Let us also recall a standard result (see e.g. \cite{4}) that if $(X_t)_{t \geq 0}$ is a Brownian motion under $\mathbb{P}$ and $(L_t)_{t \geq 0}$ is its local time at $0$ 
then the joint density of $X_t$ and $L_t$ at any time $t > 0$ is
\begin{equation}
\label{density}
\mathbb{P} \big( X_t \in \mathrm{d}x , L_t \in \mathrm{d}l \big) = \frac{|x| + l}{ \sqrt{2 \pi t^3}} \exp \Big\{ - \frac{(|x| + l)^2}{2 t}
\Big\} \mathrm{d}x \mathrm{d}l \text{ ,} \qquad x \in \mathbb{R} \text{, } l > 0 \text{.}
\end{equation}
Lemma \ref{many_one} together with \eqref{density} yields the following simple formula for $E \big\vert N_t^{\lambda} \big\vert$.
\begin{Proposition}
\label{first}
For $\lambda > 0$
\begin{equation}
\label{E_exact}
E \big\vert N_t^{\lambda} \big\vert = \Phi \big( \beta \sqrt{t} - \frac{y}{\sqrt{t}} \big) \mathrm{e}^{- \frac{\beta^2}{2}t - \beta \lambda } \text{ ,}
\end{equation}
where $\Phi(x) = \frac{1}{\sqrt{2 \pi}} \int_{- \infty}^x \mathrm{e}^{-\frac{y^2}{2}} \mathrm{d}y$ is the cumulative distribution function of a standard normal. In 
particular, for $t$ sufficiently large so that $\frac{\beta}{2}t + y > 0$, 
\begin{equation}
\label{E_exact1}
E \big\vert N_t^{\frac{\beta}{2}t + y} \big\vert = \Phi \big( \frac{\beta}{2} \sqrt{t} - \frac{y}{\sqrt{t}} \big) \mathrm{e}^{- \beta y} \text{.}
\end{equation}
\end{Proposition}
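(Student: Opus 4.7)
The starting point is the Many-to-One Lemma applied to $f(x) = \Ind_{\{x \geq \lambda\}}$, reducing the problem to the single-particle functional
\[
E|N_t^\lambda| = \tilde{E}\bigl[\Ind_{\{\xi_t \geq \lambda\}}\,\E^{\beta \tilde{L}_t}\bigr].
\]
I would substitute the joint density \eqref{density} of $(\xi_t, \tilde{L}_t)$, using $|x| = x$ since the outer integration variable runs over $[\lambda,\infty) \subset (0,\infty)$, to obtain the explicit double integral
\[
\int_\lambda^\infty \int_0^\infty \frac{x+l}{\sqrt{2\pi t^3}}\exp\!\left(\beta l - \frac{(x+l)^2}{2t}\right)\D l\,\D x.
\]

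The first computational step is to complete the square in the exponent, writing $\beta l - (x+l)^2/(2t) = -(x+l-\beta t)^2/(2t) + \beta^2 t/2 - \beta x$. This extracts a factor $\E^{\beta^2 t/2 - \beta x}$ and recentres the Gaussian on $\beta t$. Substituting $u = x + l$ in the inner integral and splitting the resulting integrand into the pieces $\frac{u - \beta t}{\sqrt{2\pi t^3}}\E^{-(u-\beta t)^2/(2t)}$ and $\frac{\beta}{\sqrt{2\pi t}}\E^{-(u-\beta t)^2/(2t)}$, the first antidifferentiates exactly to $-\E^{-(u-\beta t)^2/(2t)}/\sqrt{2\pi t}$ while the second integrates to $\beta\,\Phi((\beta t - x)/\sqrt{t})$.

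The second step is an integration by parts in the remaining outer $x$-integral of $\beta\E^{-\beta x}\Phi((\beta t - x)/\sqrt{t})$. Differentiating $\Phi((\beta t - x)/\sqrt{t})$ regenerates exactly the Gaussian kernel $\E^{-(x-\beta t)^2/(2t)}/\sqrt{2\pi t}$ that arose from the first piece, so the two non-elementary integrals cancel. Only the boundary term at $x = \lambda$ survives, yielding
\[
E|N_t^\lambda| = \E^{\beta^2 t/2 - \beta\lambda}\,\Phi\!\left(\frac{\beta t - \lambda}{\sqrt{t}}\right),
\]
and the specialisation $\lambda = \frac{\beta}{2}t + y$ gives \eqref{E_exact1}.

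The main obstacle is not any deep idea but spotting this cancellation. At first glance the integrand $(x+l)\E^{\beta l - (x+l)^2/(2t)}$ looks as though it should leave a genuine error function behind, but the prefactor $(x+l)$ provided by the joint density of Brownian motion and its local time is precisely what makes completion-of-square plus one integration by parts collapse the whole expression to a single boundary term.
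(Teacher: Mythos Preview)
Your computation is correct, but you have taken the harder of the two natural routes. The paper integrates in the opposite order: it does the $x$-integral first, observing that for fixed $l$ the factor $(x+l)\exp\{-(x+l)^2/(2t)\}$ is an exact $x$-derivative, so
\[
\int_\lambda^\infty \frac{x+l}{\sqrt{2\pi t^3}}\,\E^{-(x+l)^2/(2t)}\,\D x
=\frac{1}{\sqrt{2\pi t}}\,\E^{-(\lambda+l)^2/(2t)} .
\]
One is then left with a single Gaussian integral in $l$, and completing the square in $l$ immediately produces $\E^{\beta^2 t/2-\beta\lambda}\,\Phi(\beta\sqrt{t}-\lambda/\sqrt{t})$ with no integration by parts and no cancellation to spot. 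Your route---completing the square in $u=x+l$ first, splitting off the linear piece, and then integrating by parts in $x$ so that the resulting error-function term cancels the leftover Gaussian---works, but the ``obstacle'' you describe is self-inflicted: it disappears once you notice that the prefactor $x+l$ supplied by the joint density is tailor-made for the $x$-integration to be elementary. The moral is that with densities of the form $g'(x)\,\E^{-g(x)^2/2}$ one should always try integrating in that variable first.
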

\begin{proof}
Take $f(\cdot) = \mathbf{1}_{[\lambda, \infty)} (\cdot)$ in Lemma \ref{many_one}. Then
\[
E |N_t^{\lambda}| = E \Big[ \sum_{u \in N_t} \mathbf{1}_{\{X^u_t \geq \lambda\}} \Big] = \tilde{E} \Big[ \mathbf{1}_{\{\xi_t \geq \lambda\}} \ \mathrm{e}^{\beta \tilde{L}_t} \Big] \text{.}
\]
Substituting the joint density of $\xi_t$ and $\tilde{L}_t$ from \eqref{density} gives
\begin{align*}
E \big\vert N_t^{\lambda} \big\vert  &= \int_0^{\infty} \int_{\lambda}^{\infty} \mathrm{e}^{\beta l} \frac{x + l}{\sqrt{2 \pi t^3}} 
\exp \Big\{ - \frac{(x + l)^2}{2t} \Big\} \mathrm{d}x \mathrm{d}l \\
&= \int_0^{\infty} \mathrm{e}^{\beta l} \frac{1}{\sqrt{2 \pi t}} \exp \Big\{ - \frac{(\lambda + l)^2}{2t} \Big\} \mathrm{d}l \\
&= \int_0^{\infty} \frac{1}{\sqrt{2 \pi t}} \exp \Big\{ - \frac{1}{2t} \big( l - (\beta t - \lambda)\big)^2 + \frac{\beta^2}{2}t - \beta \lambda \Big\} \mathrm{d}l \\
&= \mathrm{e}^{\frac{\beta^2}{2}t - \beta \lambda} \int_{-(\beta \sqrt{t} - \frac{\lambda}{\sqrt{t}})}^{\infty} \frac{1}{\sqrt{2 \pi}} 
\mathrm{e}^{- \frac{z^2}{2}} \mathrm{d}z \\
&= \Phi \big( \beta \sqrt{t} - \frac{y}{\sqrt{t}} \big) \mathrm{e}^{- \frac{\beta^2}{2}t - \beta \lambda } \text{.}
\end{align*}
\end{proof}
It follows from \eqref{E_exact1} that for any $y \in \mathbb{R}$ and $t > - \frac{2y}{\beta}$
\begin{equation}
\label{ineq_1}
E \big\vert N_t^{\frac{\beta}{2}t + y} \big\vert \leq \mathrm{e}^{- \beta y}
\end{equation}
and for a fixed $y \in \mathbb{R}$
\begin{equation}
\label{lim_1}
E \big\vert N_t^{\frac{\beta}{2}t + y} \big\vert \to \mathrm{e}^{- \beta y} \quad \text{ as } t \to \infty \text{.}
\end{equation}

\subsection{'Many-to-Two' lemma and applications}
The second moment of $|N_t^{\lambda}|$ is harder to deal with. Recently Harris and Roberts \cite{3} established a general 
'Many-to-Few' lemma which allows computing $k$th moments of branching processes in a systematic way.

We shall first state the special case of this formula for binary catalytic branching in Lemma \ref{many_two}. Then we shall convert this 
formula into a more suitable form in Corollary \ref{simpler} and then use this form to get a good estimate of $E\big[|N_t^{\lambda}|^2\big]$.

For this subsection we need to extend the branching process by introducing two independent spines. That is, we have two infinite lines of 
descent started from the initial particle of the branching process which then with probability $\frac{1}{2}$ independently of each other 
choose to follow one of the initial particle's children and so on. We let $\tilde{P}^2$ be the extension of the probability measure $P$ 
under which the branching process is defined with two independent spines.
 
Moreover, we want to define a new probability measure $\tilde{Q}^2$ so that under $\tilde{Q}^2$ the branching process with 
the two spines can be described as follows.
\begin{itemize}
\item We begin with a single particle moving as a Brownian motion and carrying two marks: $1$ and $2$.
\item The particles in the system undergo binary fission and every time a particle branches every mark carried by that 
particle (there could be $0$, $1$ or $2$ such marks) chooses to follow one of the children with probability $\frac{1}{2}$ 
independently of the other mark. Sequences of particles carrying marks $1$ and $2$ thus define two independent spines.
\item The difference from $\tilde{P}^2$ is that under $\tilde{Q}^2$ particles carrying two marks will branch at rate $4 \beta \delta_0(\cdot)$, 
particles carrying one mark will branch at rate $2 \beta \delta_0(\cdot)$ and particles carrying no marks will branch at rate $\beta \delta_0(\cdot)$. 
\end{itemize}

We let $\xi^1_t$ and $\xi^2_t$ be the positions of particles carrying marks $1$ and $2$ respectively so that $(\xi^1_t)_{t \geq 0}$ and 
$(\xi^2_t)_{t \geq 0}$ are two (correlated) Brownian motions. We let $(\tilde{L}^1_t)_{t \geq 0}$ and $(\tilde{L}^2_t)_{t \geq 0}$ 
be the corresponding local times. We also let $T$ be the time when the two marks stop following the same particle (that is, the two spines separate from each other).

In such a setup we have the following special case of a result from \cite{3}:
\begin{Lemma}['Many-to-Two' Lemma]
\label{many_two}
Let $f(\cdot)$, $g(\cdot) : \mathbb{R} \to \mathbb{R}$ be non-negative measurable functions. Then 
\begin{align}
\label{many_to_two_1}
E \Big[ \Big( \sum_{u \in N_t} f(X^u_t) \Big) \Big( \sum_{u \in N_t} g(X^u_t) \Big) \Big] = \ &\tilde{Q}^2 \Big( \mathbf{1}_{\{T > t\}} 
f(\xi_t^1)g(\xi_t^1) \mathrm{e}^{3 \beta \tilde{L}_t^1}\Big) \nonumber\\
+ \ &\tilde{Q}^2 \Big( \mathbf{1}_{\{T \leq t\}} f(\xi_t^1) g(\xi_t^2) \mathrm{e}^{3 \beta \tilde{L}_T^1} \mathrm{e}^{\beta(\tilde{L}_t^1 - \tilde{L}_T^1)} 
\mathrm{e}^{\beta(\tilde{L}_t^2 - \tilde{L}_T^2)} \Big) \text{.} 
\end{align}
\end{Lemma}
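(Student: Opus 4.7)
The plan is to prove the identity by a change-of-measure argument on the two-spine process. First, rewrite the left-hand side as the double sum $E\sum_{u,v\in N_t} f(X^u_t)g(X^v_t)$. Under $\tilde{P}^2$, conditional on the branching tree, each spine independently picks a line of descent by flipping a fair coin at every branch point. Hence, if $n(u)$ denotes the number of branch points on the ancestral line of $u\in N_t$, the conditional probability that spine $1$ lands at $u$ and spine $2$ at $v$ is $2^{-n(u)-n(v)}$. Multiplying and summing gives
\[
E\sum_{u,v\in N_t} f(X^u_t)\, g(X^v_t) \;=\; \tilde{E}^2\Big[f(\xi^1_t)\,g(\xi^2_t)\,\cdot\, 2^{n^1_t + n^2_t}\Big]\text{,}
\]
where $n^i_t$ is the number of branch events on spine $i$'s line up to time $t$ (equivalently, the value at time $t$ of a Cox process of rate $\beta\,\D \tilde{L}^i_s$).

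The next step is to rewrite the combinatorial factor $2^{n^1_t+n^2_t}$ using the change of measure from $\tilde{P}^2$ to $\tilde{Q}^2$. The two measures agree on the motions and on the branching of mark-less particles; they differ only along the spines, where the branching intensity under $\tilde{Q}^2$ is multiplied by $4$ while both marks sit on the same particle (i.e.\ on $[0,T\wedge t]$) and by $2$ on each singly-marked particle thereafter. By the standard Girsanov-type formula for Cox processes with local-time intensities, decomposing $n^1_t+n^2_t = 2 n_T + \tilde{n}^1 + \tilde{n}^2$ (where $n_T$ counts shared branches and $\tilde{n}^i$ the branches on spine $i$ in $(T\wedge t,t]$),
\[
\frac{\D \tilde{Q}^2}{\D \tilde{P}^2}\bigg|_{\F_t} \;=\; 4^{n_T}\cdot 2^{\tilde{n}^1}\cdot 2^{\tilde{n}^2}\cdot \exp\Big\{-3\beta\tilde{L}^1_T - \beta(\tilde{L}^1_t - \tilde{L}^1_T) - \beta(\tilde{L}^2_t - \tilde{L}^2_T)\Big\}\text{.}
\]
Since $4^{n_T}\cdot 2^{\tilde{n}^1+\tilde{n}^2} = 2^{n^1_t+n^2_t}$, solving for the combinatorial factor yields
\[
2^{n^1_t+n^2_t} \;=\; \frac{\D \tilde{Q}^2}{\D \tilde{P}^2}\,\cdot\, \exp\Big\{3\beta\tilde{L}^1_T + \beta(\tilde{L}^1_t - \tilde{L}^1_T) + \beta(\tilde{L}^2_t - \tilde{L}^2_T)\Big\}\text{.}
\]
Substituting into the previous display and splitting on $\{T>t\}$ (where $\xi^1_t=\xi^2_t$, $\tilde{L}^i_{T\wedge t} = \tilde{L}^i_t$, and the last two exponentials collapse to $1$) versus $\{T\leq t\}$ produces exactly the two $\tilde{Q}^2$-expectations on the right-hand side.

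The main technical point is justifying the Radon-Nikodym derivative cleanly, since the branching intensities are singular (carried by a Dirac mass at $0$). This is handled by viewing the branching counts along each spine as Cox processes with random intensity $\beta\,\D\tilde{L}^i_s$, and appealing to the associated exponential martingale (or, equivalently, smoothing $\delta_0$ and passing to the limit). One also needs that $T$ is a stopping time for the natural filtration under both $\tilde{P}^2$ and $\tilde{Q}^2$, which is automatic from its construction. Alternatively, the formula is a direct specialisation of the general `Many-to-Few' lemma of \cite{3}, and the above computation simply amounts to identifying the weights and intensities in that general statement.
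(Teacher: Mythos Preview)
The paper does not actually prove this lemma: it is presented as a direct special case of the general Many-to-Few lemma of Harris and Roberts \cite{3}, with no argument supplied beyond the description of the two-spine measure $\tilde{Q}^2$. Your proposal, by contrast, gives a self-contained derivation: you identify the left-hand side with $\tilde{E}^2[f(\xi^1_t)g(\xi^2_t)\,2^{n^1_t+n^2_t}]$ via the uniform spine selection, compute the Radon--Nikodym derivative $\D\tilde{Q}^2/\D\tilde{P}^2$ on $\F_t$ by treating the branching along marked lines as Cox processes driven by local time, and then split on $\{T>t\}$ versus $\{T\le t\}$. This is correct, with the minor caveat that in your displayed Radon--Nikodym formula $\tilde{L}^1_T$ should be read as $\tilde{L}^1_{T\wedge t}$ (which you in fact do in the case analysis that follows). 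Your closing remark---that the computation is just the specialisation of \cite{3} to the catalytic setting---is precisely the paper's own position; you have simply unpacked what the citation entails.
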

To make explicit calculations easier we simplify \eqref{many_to_two_1} in the following form:
\begin{Proposition}
\label{simpler}
Let $f(\cdot)$, $g(\cdot) : \mathbb{R} \to \mathbb{R}$ be non-negative measurable functions and define
\[
S_f(t) := E \Big( \sum_{u \in N_t} f(X^u_t) \Big) 
\]
to be the first moment of $\sum_{u \in N_t} f(X^u_t)$. Then 
\begin{equation}
\label{many_to_two_2}
E \Big[ \Big( \sum_{u \in N_t} f(X^u_t) \Big) \Big( \sum_{u \in N_t} g(X^u_t) \Big) \Big] = S_{fg}(t) + 
2 \int_0^t S_f(t - s) S_g(t - s) \frac{\partial}{\partial s} \Big( 2 \Phi(\beta \sqrt{s}) \mathrm{e}^{\frac{\beta^2}{2}s} \Big) \mathrm{d}s \text{.}
\end{equation}
\end{Proposition}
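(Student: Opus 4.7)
The plan is to unpack each summand on the right-hand side of \eqref{many_to_two_1} by conditioning on the spine trajectory $\xi^1$ and then recognising the resulting one-spine expectations via Lemma \ref{many_one}. The key observation, used throughout, is that under $\tilde{Q}^2$ a doubly-marked particle branches at rate $4\beta\delta_0(\cdot)$ and at each branching the two marks independently follow one of the two children, so they stay together with probability $\tfrac{1}{2}$. Since $\xi^1$ is a standard Brownian motion under $\tilde{Q}^2$, this gives
\[
\tilde{Q}^2\big(T > t \, \big\vert \, \xi^1\big) = \mathrm{e}^{-2\beta \tilde{L}_t^1}, \qquad \tilde{Q}^2(T \in \mathrm{d}s \, \vert \, \xi^1) = 2\beta\, \mathrm{e}^{-2\beta \tilde{L}_s^1}\, \mathrm{d}\tilde{L}_s^1.
\]

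For the first term, on $\{T > t\}$ the two marks still follow the same particle, so $\xi_t^1 = \xi_t^2$ and the integrand simplifies to $(fg)(\xi_t^1)\mathrm{e}^{3\beta\tilde{L}_t^1}\mathbf{1}_{T > t}$. Conditioning on $\xi^1$ and using the first identity above, the factor $\mathrm{e}^{3\beta\tilde{L}_t^1}$ combines with $\mathrm{e}^{-2\beta\tilde{L}_t^1}$ to leave $\mathrm{e}^{\beta\tilde{L}_t^1}$, and Lemma \ref{many_one} then identifies the result as $S_{fg}(t)$.

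For the second term, branching only occurs at the origin, so on $\{T \leq t\}$ we have $\xi_T^1 = \xi_T^2 = 0$. Conditioning on $T = s$, the strong Markov property yields that the post-$s$ trajectories of $\xi^1$ and $\xi^2$ are two independent Brownian motions started from $0$, whose local-time increments over $[s,t]$ are $\tilde{L}_t^i - \tilde{L}_s^i$ for $i = 1, 2$; another application of Lemma \ref{many_one} to each factor then produces $S_f(t-s)S_g(t-s)$. It remains to identify the measure $\tilde{Q}^2\big(\mathrm{e}^{3\beta\tilde{L}_T^1}\mathbf{1}_{T \in \mathrm{d}s}\big)$ in $s$. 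Conditioning on $\xi^1$ and using the second identity above turns this into $2\beta\,\tilde{E}\big[\mathrm{e}^{\beta\tilde{L}_s}\,\mathrm{d}\tilde{L}_s\big]$.

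Finally, since local time is continuous of bounded variation, $\mathrm{d}(\mathrm{e}^{\beta\tilde{L}_s}) = \beta\mathrm{e}^{\beta\tilde{L}_s}\,\mathrm{d}\tilde{L}_s$, so in the sense of measures in $s$ the above equals $2\,\mathrm{d}\tilde{E}[\mathrm{e}^{\beta\tilde{L}_s}]$. The Gaussian computation already carried out in the proof of Proposition \ref{first} gives $\tilde{E}[\mathrm{e}^{\beta\tilde{L}_s}] = 2\Phi(\beta\sqrt{s})\mathrm{e}^{\beta^2 s/2}$, and reassembling the pieces produces \eqref{many_to_two_2}. The main technical care will be in justifying the differentiation/interchange that identifies $2\beta\,\tilde{E}[\mathrm{e}^{\beta\tilde{L}_s}\,\mathrm{d}\tilde{L}_s]$ with $2\frac{\partial}{\partial s}\big(2\Phi(\beta\sqrt{s})\mathrm{e}^{\beta^2 s/2}\big)\mathrm{d}s$ distributionally; everything else reduces to a routine application of the strong Markov property and the Many-to-One formula.
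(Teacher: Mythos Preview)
Your proposal is correct and follows essentially the same route as the paper: condition on $\xi^1$ to handle the law of $T$, use the strong Markov property at $T$ together with Lemma~\ref{many_one} to produce $S_f(t-s)S_g(t-s)$, and then identify $\tilde{E}[\mathrm{e}^{\beta\tilde{L}_s}]=2\Phi(\beta\sqrt{s})\mathrm{e}^{\beta^2 s/2}$. The only place you diverge in presentation is the last step: where you propose to commute the expectation with the Stieltjes differential directly (writing $\tilde{E}[\mathrm{e}^{\beta\tilde{L}_s}\,\mathrm{d}\tilde{L}_s]=\beta^{-1}\mathrm{d}\tilde{E}[\mathrm{e}^{\beta\tilde{L}_s}]$), the paper instead integrates $\int_0^t S_f(t-s)S_g(t-s)\,\mathrm{d}(\mathrm{e}^{\beta\tilde{L}_s^1})$ by parts inside the expectation, applies Fubini to the resulting Lebesgue integral, and then integrates by parts back---this is exactly the rigorous justification of the interchange you flag as needing care.
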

\begin{proof}[Proof of Proposition \ref{simpler}]
Note that, from the definition of $\tilde{Q}^2$, 
\begin{equation}
\label{Q}
\tilde{Q}^2\Big( T > t \ \big\vert \ (\xi^1_s)_{s \geq 0}\Big) = \mathrm{e}^{-2 \beta \tilde{L}^1_t} \text{.}
\end{equation}
That is, the two spines will split apart at half of the branching rate $4 \beta \delta(\cdot)$. Then the first term of \eqref{many_to_two_1} is just 
\begin{align}
\label{first_term}
\tilde{Q}^2 \Big( \mathbf{1}_{\{T > t\}} f(\xi_t^1)g(\xi_t^1) \mathrm{e}^{3 \beta \tilde{L}_t^1}\Big) &= 
\tilde{Q}^2 \Big( \tilde{Q}^2 \Big( \mathbf{1}_{\{T > t\}} f(\xi_t^1)g(\xi_t^1) \mathrm{e}^{3 \beta \tilde{L}_t^1} \ \big\vert (\xi^1_s)_{s \geq 0} \Big) \Big)\nonumber \\
&= \tilde{Q}^2 \Big( f(\xi_t^1)g(\xi_t^1) \mathrm{e}^{3 \beta \tilde{L}_t^1} \mathrm{e}^{-2 \beta \tilde{L}^1_t}\Big)\nonumber \\ 
&= S_{fg}(t) 
\end{align}
using Lemma \ref{many_one} for the last equality. The second term is more complicated. 

If we let $\hat{\xi}^{1,2}_t := \xi^{1,2}_{T+t} - \xi^{1,2}_T$, $t \geq 0$ and $\hat{L}^{1,2}_t := \tilde{L}^{1,2}_{T+t} - \tilde{L}^{1,2}_T$ then under 
$\tilde{Q}^2$, $(\hat{\xi}_t^{1,2})_{t \geq 0}$ are two independent Brownian motions, both independent of $(\xi^1_t)_{0 \leq t \leq T}$ and 
$(\hat{L}_t^{1,2})_{t \geq 0}$ are their local times. Thus the second term in \eqref{many_to_two_1} is
\begin{align*}
&\tilde{Q}^2 \Big( \mathbf{1}_{\{T \leq t\}} f(\xi_t^1) g(\xi_t^2) \mathrm{e}^{3 \beta \tilde{L}_T^1} \mathrm{e}^{\beta(\tilde{L}_t^1 - \tilde{L}_T^1)} 
\mathrm{e}^{\beta(\tilde{L}_t^2 - \tilde{L}_T^2)} \Big) \\
= &\tilde{Q}^2 \Big( \mathbf{1}_{\{T \leq t\}} f(\hat{\xi}^1_{t-T}) g(\hat{\xi}^2_{t-T}) \mathrm{e}^{3 \beta \tilde{L}_T^1} \mathrm{e}^{\beta \hat{L}^1_{t - T}} 
\mathrm{e}^{\beta \hat{L}^2_{t - T}}\Big) \\
= &\tilde{Q}^2 \Big( \tilde{Q}^2 \Big( \mathbf{1}_{\{T \leq t\}} f(\hat{\xi}^1_{t-T}) g(\hat{\xi}^2_{t-T}) \mathrm{e}^{3 \beta \tilde{L}_T^1} \mathrm{e}^{\beta \hat{L}^1_{t - T}} 
\mathrm{e}^{\beta \hat{L}^2_{t - T}} \ \big\vert \ T , (\xi_t^1)_{0 \leq t \leq T} \Big) \Big)\\
= &\tilde{Q}^2 \Big( \mathbf{1}_{\{T \leq t\}} \mathrm{e}^{3 \beta \tilde{L}_T^1} S_f(t - T) S_g(t - T)\Big)
\end{align*}
using Lemma \ref{many_one} and independence of $(\hat{\xi}^{1}_t)_{t \geq 0} $ and $(\hat{\xi}^{2}_t)_{t \geq 0}$ of each other and of $(\xi_t^1)_{0 \leq t \leq T}$. Then
\begin{align*}
&\tilde{Q}^2 \Big( \mathbf{1}_{\{T \leq t\}} \mathrm{e}^{3 \beta \tilde{L}_T^1} S_f(t - T) S_g(t - T)\Big)\\
= &\tilde{Q}^2 \Big( \tilde{Q}^2 \Big( \mathbf{1}_{\{T \leq t\}} \mathrm{e}^{3 \beta \tilde{L}_T^1} S_f(t - T) S_g(t - T) \ \big\vert \ (\xi^1_s)_{s \geq 0}\Big) \Big)\\
= &\tilde{Q}^2 \Big( \int_0^t \mathrm{e}^{3 \beta \tilde{L}_s^1} S_f(t - s) S_g(t - s) \ \mathrm{d} \big( - \mathrm{e}^{-2 \beta \tilde{L}^1_s} \big) \Big)
\end{align*}
using \eqref{Q}. Then
\begin{align*}
&\tilde{Q}^2 \Big( \int_0^t \mathrm{e}^{3 \beta \tilde{L}_s^1} S_f(t - s) S_g(t - s) \ \mathrm{d} \big( - \mathrm{e}^{-2 \beta \tilde{L}^1_s} \big) \Big)\\
= &\tilde{Q}^2 \Big( \int_0^t \mathrm{e}^{3 \beta \tilde{L}_s^1} S_f(t - s) S_g(t - s) 2 \beta \mathrm{e}^{-2 \beta \tilde{L}^1_s} \mathrm{d} \tilde{L}^1_s \Big)\\
= &2 \tilde{Q}^2 \Big( \int_0^t S_f(t - s) S_g(t - s) \ \mathrm{d} \big( \mathrm{e}^{\beta \tilde{L}^1_s} \big) \Big) \text{.}
\end{align*}
Finally, using integration-by-parts and Fubini's theorem we get
\begin{align}
\label{second_term}
&2 \tilde{Q}^2 \Big( \int_0^t S_f(t - s) S_g(t - s) \ \mathrm{d} \big( \mathrm{e}^{\beta \tilde{L}^1_s} \big) \Big)\nonumber \\
= &2 \tilde{Q}^2 \Big( f(0)g(0) \mathrm{e}^{\beta \tilde{L}^1_t} - S_f(t)S_g(t) - \int_0^t \frac{\partial}{\partial s} \big( 
S_f(t - s) S_g(t - s) \big) \ \mathrm{e}^{\beta \tilde{L}^1_s} \mathrm{d}s \Big)\nonumber \\
= &2 \Big( f(0)g(0) \tilde{Q}^2 \big( \mathrm{e}^{\beta \tilde{L}^1_t} \big) -S_f(t)S_g(t) - \int_0^t \frac{\partial}{\partial s} \big( 
S_f(t - s) S_g(t - s) \big) \ \tilde{Q}^2  \big( \mathrm{e}^{\beta \tilde{L}^1_s} \big) \mathrm{d}s \Big)\nonumber \\
= &2 \int_0^t S_f(t - s) S_g(t - s) \frac{\partial}{\partial s} \Big( \tilde{Q}^2  \big( \mathrm{e}^{\beta \tilde{L}^1_s} \big) \Big) \mathrm{d}s\nonumber \\
= &2 \int_0^t S_f(t - s) S_g(t - s) \frac{\partial}{\partial s} \Big( 2 \Phi(\beta \sqrt{s}) \mathrm{e}^{\frac{\beta^2}{2}s} \Big) \mathrm{d}s \text{,}
\end{align}
which together with \eqref{first_term} gives the sought formula \eqref{many_to_two_2}.
\end{proof}
As a simple application of \eqref{many_to_two_2} we get the following useful inequality.
\begin{Proposition}
\label{second_moment}
For all $y \in \mathbb{R}$, $t > - \frac{2y}{\beta}$
\begin{equation}
\label{sec_mom_ineq}
E \Big[ \big\vert N_t^{\frac{\beta}{2}t + y} \big\vert^2 \Big] \leq \mathrm{e}^{- \beta y} + C \mathrm{e}^{-2 \beta y} \text{,}
\end{equation}
\label{sec_mom_lim}
where $C>0$ is some positive finite constant which doesn't depend on $t$ or $y$. 
\end{Proposition}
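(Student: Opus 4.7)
The plan is to apply Proposition \ref{simpler} directly with $f = g = \mathbf{1}_{[\lambda,\infty)}$, where $\lambda = \frac{\beta}{2}t + y$. Since $f \cdot g = f$ is itself an indicator, the first term in \eqref{many_to_two_2} is simply $S_{fg}(t) = E|N_t^\lambda| \leq \E^{-\beta y}$ by the bound \eqref{ineq_1}. So the entire task reduces to showing the second (integral) term is bounded by $C \E^{-2\beta y}$.

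For this I would use the exact formula \eqref{E_exact} from Proposition \ref{first}, but applied at time $t-s$ with the fixed level $\lambda = \frac{\beta}{2}t + y$ (not $\frac{\beta}{2}(t-s)+y$). A short computation gives
\[
S_f(t-s) = \Phi\!\left(\beta\sqrt{t-s} - \tfrac{\lambda}{\sqrt{t-s}}\right) \E^{\frac{\beta^2}{2}(t-s) - \beta\lambda} \;\leq\; \E^{-\frac{\beta^2}{2}s - \beta y},
\]
using $\Phi \leq 1$ and the identity $\frac{\beta^2}{2}(t-s) - \beta\lambda = -\frac{\beta^2}{2}s - \beta y$. Squaring yields $S_f(t-s)^2 \leq \E^{-\beta^2 s}\E^{-2\beta y}$, which pulls the desired factor $\E^{-2\beta y}$ out of the integral.

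Next I would differentiate the kernel: a direct calculation gives
\[
\tfrac{\partial}{\partial s}\!\left( 2\Phi(\beta\sqrt{s})\E^{\frac{\beta^2}{2}s}\right) = \tfrac{\beta}{\sqrt{2\pi s}} + \beta^2 \Phi(\beta\sqrt{s})\E^{\frac{\beta^2}{2}s},
\]
where the first term comes from differentiating $\Phi$ (after cancellation between the Gaussian density and the exponential) and the second from differentiating $\E^{\frac{\beta^2}{2}s}$. Combining this with the bound on $S_f(t-s)^2$, the integral term is at most
\[
2\E^{-2\beta y}\int_0^t \E^{-\beta^2 s}\!\left(\tfrac{\beta}{\sqrt{2\pi s}} + \beta^2 \E^{\frac{\beta^2}{2}s}\right)\diffd s,
\]
and both pieces, $\int_0^\infty \frac{\beta}{\sqrt{2\pi s}}\E^{-\beta^2 s}\diffd s$ and $\int_0^\infty \beta^2 \E^{-\frac{\beta^2}{2}s}\diffd s$, are finite and independent of $t$ and $y$. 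Setting $C$ to be twice their sum finishes the proof.

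The only mildly delicate point is the choice of bound $S_f(t-s) \leq \E^{-\frac{\beta^2}{2}s - \beta y}$: using the cruder $\E^{-\beta y}$ from \eqref{ineq_1} would not work because $S_f$ is evaluated at the fixed level $\lambda$ rather than at $\frac{\beta}{2}(t-s)+y$, and one needs the extra Gaussian decay factor $\E^{-\frac{\beta^2}{2}s}$ to tame the exponentially growing kernel. Once this observation is in hand, the rest is routine.
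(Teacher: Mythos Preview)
Your proof is correct and follows essentially the same route as the paper: apply Proposition~\ref{simpler} with $f=g=\mathbf{1}_{[\lambda,\infty)}$, bound the first term by \eqref{ineq_1}, bound $S_f(t-s)\le \E^{-\frac{\beta^2}{2}s-\beta y}$, and check that $C=2\int_0^\infty \E^{-\beta^2 s}\,\partial_s\big(2\Phi(\beta\sqrt{s})\E^{\beta^2 s/2}\big)\,\diffd s<\infty$. Your closing remark slightly mischaracterises \eqref{ineq_1}: the paper in fact obtains the very same bound $\E^{-\frac{\beta^2}{2}s-\beta y}$ directly from \eqref{ineq_1} by writing $\tfrac{\beta}{2}t+y=\tfrac{\beta}{2}(t-s)+\big(\tfrac{\beta}{2}s+y\big)$ and applying it with the shifted offset, which is equivalent to your use of \eqref{E_exact} together with $\Phi\le 1$.
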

\begin{Remark}
\label{rem}
One can also show that for a fixed $y \in \mathbb{R}$
\begin{equation}
\label{lim_const}
\lim_{t \to \infty} E \Big[ \big\vert N_t^{\frac{\beta}{2}t + y} \big\vert^2 \Big] = \mathrm{e}^{- \beta y} + 2(1 + \sqrt{2}) \mathrm{e}^{-2 \beta y} \text{,}
\end{equation}
but it's not so important for this paper.
\end{Remark}

\begin{proof}[Proof of Proposition \ref{second_moment}]
We substitute $f(\cdot) = g(\cdot) = \mathbf{1}_{[\frac{\beta}{2}t + y, \infty)}(\cdot)$ in \eqref{many_to_two_2} so that 
\[
E \Big[ \big\vert N_t^{\frac{\beta}{2}t + y} \big\vert^2 \Big] = E \big\vert N_t^{\frac{\beta}{2}t + y} \big\vert + 2 \int_0^t 
\Big[ E \big\vert N_{t-s}^{\frac{\beta}{2}t + y} \big\vert \Big]^2 \frac{\partial}{\partial s} \Big( 2 \Phi(\beta \sqrt{s}) \mathrm{e}^{\frac{\beta^2}{2}s} \Big) \mathrm{d}s \text{.}
\]
From \eqref{ineq_1} we know that for $t > -\frac{2y}{\beta}$ and $s \in [0, t]$
\[
E \big\vert N_t^{\frac{\beta}{2}t + y} \big\vert \leq \mathrm{e}^{- \beta y} \text{ and } 
E \big\vert N_{t-s}^{\frac{\beta}{2}t + y} \big\vert = E \big\vert N_{t-s}^{\frac{\beta}{2}(t - s) + \frac{\beta}{2}s + y} \big\vert \leq \mathrm{e}^{- \beta (\frac{\beta}{2}s + y)} \text{.}
\]
Thus, noting that $\frac{\partial}{\partial s} \big( 2 \Phi(\beta \sqrt{s}) \mathrm{e}^{\frac{\beta^2}{2}s} \big) > 0$ since 
$2 \Phi(\beta \sqrt{s}) \mathrm{e}^{\frac{\beta^2}{2}s}$ is increasing in $s$, we get
\begin{align*}
E \Big[ \big\vert N_t^{\frac{\beta}{2}t + y} \big\vert^2 \Big] &\leq \mathrm{e}^{- \beta y} + \mathrm{e}^{- 2 \beta y} 2 \int_0^t 
\mathrm{e}^{- \beta^2 s} \frac{\partial}{\partial s} \Big( 2 \Phi(\beta \sqrt{s}) \mathrm{e}^{\frac{\beta^2}{2}s} \Big) \mathrm{d}s \\
&\leq \mathrm{e}^{- \beta y} + C \mathrm{e}^{-2 \beta y}   \qquad \forall t > -\frac{2y}{\beta} \text{,}
\end{align*}
where $C = 2 \int_0^{\infty} \mathrm{e}^{- \beta^2 s} \frac{\partial}{\partial s} \big( 2 \Phi(\beta \sqrt{s}) \mathrm{e}^{\frac{\beta^2}{2}s} \big) \mathrm{d}s < \infty$.
\end{proof}
Noting that $\{ |N_t^{\frac{\beta}{2}t + y}| > 0\} = \{ R_t > \frac{\beta}{2}t + y\}$ we establish the following simple corollary of Proposition \ref{first} and 
Proposition \ref{second_moment}. 
\begin{Corollary}
\label{rightmost_inequality}
For all $y \in \mathbb{R}$, $t > - \frac{2y}{\beta}$
\[
\mathrm{e}^{- \beta y} (1 - C \mathrm{e}^{- \beta y} ) \Phi \Big(\frac{\beta}{2} \sqrt{t} - \frac{y}{\sqrt{t}} \Big)^2 < P \Big(R_t > \frac{\beta}{2}t + y \Big) < \mathrm{e}^{- \beta y} \text{.}
\] 
\end{Corollary}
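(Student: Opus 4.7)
The plan is to combine the first- and second-moment estimates from Proposition \ref{first} and Proposition \ref{second_moment} with two standard probabilistic inequalities, namely Markov for the upper bound and Paley--Zygmund for the lower bound, exploiting the identity $\{R_t > \frac{\beta}{2}t + y\} = \{|N_t^{\frac{\beta}{2}t + y}| > 0\}$.

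For the upper bound I would simply apply Markov's inequality to the integer-valued random variable $|N_t^{\frac{\beta}{2}t+y}|$:
\[
P\Bigl(R_t > \tfrac{\beta}{2}t + y\Bigr) = P\bigl(|N_t^{\frac{\beta}{2}t+y}| \geq 1\bigr) \leq E\bigl|N_t^{\frac{\beta}{2}t+y}\bigr|,
\]
and invoke \eqref{ineq_1} (or equivalently \eqref{E_exact1}) to conclude the bound is at most $\mathrm{e}^{-\beta y}$. This is immediate and requires no extra work.

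For the lower bound I would use the Paley--Zygmund inequality $P(Z > 0) \geq (EZ)^2 / E[Z^2]$ applied to $Z = |N_t^{\frac{\beta}{2}t+y}|$. The numerator is handled by Proposition \ref{first}, which gives $(EZ)^2 = \Phi\bigl(\tfrac{\beta}{2}\sqrt{t} - \tfrac{y}{\sqrt{t}}\bigr)^2 \mathrm{e}^{-2\beta y}$. The denominator is bounded above by Proposition \ref{second_moment} as $E[Z^2] \leq \mathrm{e}^{-\beta y}(1 + C\mathrm{e}^{-\beta y})$, valid precisely in the regime $t > -2y/\beta$. Dividing, the lower bound becomes
\[
P\Bigl(R_t > \tfrac{\beta}{2}t + y\Bigr) \geq \frac{\mathrm{e}^{-\beta y}\,\Phi\bigl(\tfrac{\beta}{2}\sqrt{t} - \tfrac{y}{\sqrt{t}}\bigr)^2}{1 + C\mathrm{e}^{-\beta y}}.
\]

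To finish and match the form stated in the corollary I would use the elementary inequality $\tfrac{1}{1+x} \geq 1 - x$, valid for all $x \geq 0$, with $x = C\mathrm{e}^{-\beta y}$. This is purely algebraic and introduces no extra conditions beyond what already appears. I do not foresee any real obstacle: the technical heavy lifting was already done in Propositions \ref{first} and \ref{second_moment}, so the corollary is essentially a one-step consequence of Markov and Paley--Zygmund combined with a trivial algebraic simplification.
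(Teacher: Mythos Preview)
Your proposal is correct and follows essentially the same argument as the paper: Markov's inequality with \eqref{ineq_1} for the upper bound, Paley--Zygmund with \eqref{E_exact1} and \eqref{sec_mom_ineq} for the lower bound, and the elementary inequality $\frac{1}{1+x}\geq 1-x$ to put the lower bound in the stated form.
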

So, in particular, $\liminf_{t \to \infty} P \Big(R_t > \frac{\beta}{2}t + y \Big)$, $\limsup_{t \to \infty} P \Big(R_t > \frac{\beta}{2}t + y \Big) \sim 
\mathrm{e}^{- \beta y}$ as $y \to \infty$. 
\begin{proof}[Proof of Corollary \ref{rightmost_inequality}]
Paley-Zygmund and Markov inequality give
\[
\frac{ \Big[ E \big\vert N_t^{ \frac{ \beta}{2}t + y} \big\vert \Big]^2}{E \Big[ \big\vert N_t^{ \frac{ \beta}{2}t + y} \big\vert^2 \Big]} 
\leq P \Big( \big\vert N_t^{\frac{\beta}{2}t + y} \big\vert > 0 \Big) \leq E \big\vert N_t^{\frac{\beta}{2}t + y} \big\vert
\]
Thus applying \eqref{E_exact1} and \eqref{sec_mom_ineq} to the lower bound and \eqref{ineq_1} to the upper bound gives us
\[
\frac{\Phi \Big(\frac{\beta}{2} \sqrt{t} - \frac{y}{\sqrt{t}} \Big)^2 \mathrm{e}^{- 2 \beta y}}{\mathrm{e}^{- \beta y} + C \mathrm{e}^{-2 \beta y}} \leq 
P \Big( \big\vert N_t^{\frac{\beta}{2}t + y} \big\vert > 0 \Big) \leq \mathrm{e}^{- \beta y} \text{.}
\]
Hence
\begin{align*}
\frac{\Phi \Big(\frac{\beta}{2} \sqrt{t} - \frac{y}{\sqrt{t}} \Big)^2 \mathrm{e}^{- 2 \beta y}}{\mathrm{e}^{- \beta y} + C \mathrm{e}^{-2 \beta y}} &= 
\mathrm{e}^{- \beta y} \Big( \frac{1}{1 + C \mathrm{e}^{- \beta y}} \Big) \Phi \Big(\frac{\beta}{2} \sqrt{t} - \frac{y}{\sqrt{t}} \Big)^2\\
&\geq \mathrm{e}^{- \beta y} \big( 1 - C \mathrm{e}^{- \beta y} \big) \Phi \Big(\frac{\beta}{2} \sqrt{t} - \frac{y}{\sqrt{t}} \Big)^2 \text{,}
\end{align*}
which gives the desired inequality.
\end{proof}

\section{Limiting Distribution of the Rightmost Particle.}
This section contains the proof of Theorem \ref{main}. An important preliminary step of the proof is to establish the following consequence of 
Corollary \ref{rightmost_inequality}.
\begin{Proposition} 
\label{prop_main}
Let $x_0(t)$ and $z(t)$ be such that $|x_0(t)| < \frac{1}{4 \beta} \log t$ for $t$ sufficiently large, 
$z(t) \to \infty$ and $z(t) = o(\log t)$ (that is, $\frac{z(t)}{\log t} \to 0$) as $t \to \infty$. Then for $t$ sufficiently large 
\begin{equation}
\label{ineq_main}
1 - \theta_1(t) \mathrm{e}^{- \beta |x_0(t)| - \beta z(t)} \leq P^{x_0(t)} \Big( R_t \leq \frac{\beta}{2}t + z(t) \Big) 
\leq 1 - \theta_2(t) \mathrm{e}^{- \beta |x_0(t)| - \beta z(t)} 
\end{equation}
for some functions $\theta_1(\cdot)$, $\theta_2(\cdot)$ such that $\theta_1(t)$, $\theta_2(t) \to 1$ as $t \to \infty$. 
\end{Proposition}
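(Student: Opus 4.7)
The statement is equivalent, after rearrangement, to the two-sided estimate
\[
\theta_2(t)\,\mathrm{e}^{-\beta|x_0(t)|-\beta z(t)} \;\le\; P^{x_0(t)}\Bigl(R_t > \frac{\beta}{2}t+z(t)\Bigr) \;\le\; \theta_1(t)\,\mathrm{e}^{-\beta|x_0(t)|-\beta z(t)}
\]
with $\theta_1(t),\theta_2(t)\to 1$, which is precisely Corollary \ref{rightmost_inequality} with starting point $x_0(t)$ in place of $0$. So my plan is to repeat the proof of that corollary (Markov inequality for the upper bound, Paley--Zygmund for the lower bound, both applied to the counting variable $|N_t^{\frac{\beta}{2}t+z(t)}|$) after first extending Propositions \ref{first} and \ref{second_moment} to a general starting point.

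For the first moment, Lemma \ref{many_one} reduces $E^{x_0}|N_t^\lambda|$ to $\tilde E^{x_0}\bigl[\mathbf{1}_{\xi_t\ge\lambda}\,\mathrm{e}^{\beta\tilde L_t}\bigr]$; the joint density of $(\xi_t,\tilde L_t)$ for a Brownian motion started at $x_0$ merely replaces the factor $|y|+\ell$ in \eqref{density} by $|x_0|+|y|+\ell$. A nearly identical computation to that in Proposition \ref{first} then gives
\[
E^{x_0(t)}\bigl|N_t^{\frac{\beta}{2}t+z(t)}\bigr| = \Phi\Bigl(\frac{\beta}{2}\sqrt{t}-\frac{|x_0(t)|+z(t)}{\sqrt{t}}\Bigr)\mathrm{e}^{-\beta|x_0(t)|-\beta z(t)},
\]
up to a non-hitting contribution which is the Gaussian tail of a single Brownian motion and is of order $\mathrm{e}^{-\beta^2 t/8}$ under our hypotheses. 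The $\Phi$-factor tends to $1$ since $(|x_0(t)|+z(t))/\sqrt{t}\to 0$, and the Markov inequality directly gives the upper bound in \eqref{ineq_main} with $\theta_1(t)\to 1$.

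For the second moment, the cleanest route is strong Markov at the first hitting time $\tau_0$ of $0$ by the initial particle. Conditionally on $\{\tau_0 = s\}$ with $s<t$, the system is a branching Brownian motion started at $0$ with horizon $t-s$ and level $\frac{\beta}{2}t+z(t) = \frac{\beta}{2}(t-s)+(\frac{\beta}{2}s+z(t))$, so Proposition \ref{second_moment} applies with effective parameter $y = \frac{\beta}{2}s+z(t)$. Integrating the resulting bound against the first-hitting density and using the classical identities $E^{x_0}\mathrm{e}^{-\beta^2\tau_0/2} = \mathrm{e}^{-\beta|x_0|}$ and $E^{x_0}\mathrm{e}^{-\beta^2\tau_0} = \mathrm{e}^{-\sqrt{2}\,\beta|x_0|}$ produces
\[
E^{x_0(t)}\Bigl[\bigl|N_t^{\frac{\beta}{2}t+z(t)}\bigr|^2\Bigr] \le \mathrm{e}^{-\beta|x_0(t)|-\beta z(t)} + C\,\mathrm{e}^{-\sqrt{2}\,\beta|x_0(t)|-2\beta z(t)}
\]
(again up to a negligible non-hitting term). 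Plugging this into the Paley--Zygmund inequality yields the lower bound in \eqref{ineq_main} with $\theta_2(t) = \Phi(\cdot)^2\bigl(1 + C\,\mathrm{e}^{-(\sqrt{2}-1)\beta|x_0(t)|-\beta z(t)}\bigr)^{-1}\to 1$, the convergence relying on $z(t)\to\infty$.

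The main obstacle is the book-keeping of the error terms and verifying that the hypotheses are used in the right place. The specific bound $|x_0(t)|<\frac{1}{4\beta}\log t$ enters precisely to guarantee both (i) $(|x_0(t)|+z(t))/\sqrt{t}\to 0$, so the $\Phi$ factors converge to $1$, and (ii) $\mathrm{e}^{-\beta|x_0(t)|}\ge t^{-1/4}$, which safely dominates the super-polynomially small non-hitting contributions of order $\mathrm{e}^{-\beta^2 t/8}$ so they can be absorbed into the $\theta_i(t)\to 1$ factors.
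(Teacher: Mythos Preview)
Your argument is correct and takes a genuinely different, somewhat cleaner route than the paper's.

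The paper does \emph{not} extend the moment computations to a general starting point. Instead it works directly with the probability $P^{x_0(t)}\big(R_t\le\frac{\beta}{2}t+z(t)\big)$, applies the strong Markov property at the first hitting time $T_0$ of the origin, splits on $\{T_0\le\alpha t\}$ versus $\{T_0>\alpha t\}$ for a fixed $\alpha\in(0,\tfrac12)$, and on the former event feeds the $P^0$ estimate from Corollary~\ref{rightmost_inequality} into the integral $\int_0^{\alpha t}P\big(R_{t-s}\le\tfrac{\beta}{2}t+z(t)\big)f_{T_0}(s)\,\mathrm{d}s$. The identity $E^{x_0}e^{-\beta^2 T_0/2}=e^{-\beta|x_0|}$ then produces the factor $e^{-\beta|x_0(t)|}$, and the hypothesis $|x_0(t)|<\tfrac{1}{4\beta}\log t$ together with $z(t)=o(\log t)$ is used to kill the boundary term $e^{\beta|x_0(t)|+\beta z(t)}P^{x_0(t)}(T_0>\alpha t)$. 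The restriction $\alpha<\tfrac12$ is needed so that the $\Phi$-factor in Corollary~\ref{rightmost_inequality}, evaluated at time $t-s\ge(1-\alpha)t$ with level shift $\tfrac{\beta}{2}s+z(t)\le\tfrac{\beta\alpha}{2}t+z(t)$, still tends to~$1$.

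Your approach instead pushes the starting point $x_0$ through the moment formulas themselves: the joint density \eqref{density} with $|y|+\ell$ replaced by $|x_0|+|y|+\ell$ gives an exact first moment, and the strong Markov property at $\tau_0$ combined with Proposition~\ref{second_moment} gives an upper bound on the second moment; Paley--Zygmund is then applied once at the end. This buys you freedom from the auxiliary $\alpha$-splitting (you integrate $s$ over all of $[0,t]$ because only an upper bound on the second moment is needed), at the small cost of invoking the extended density formula for $(\xi_t,\tilde L_t)$ under $\tilde P^{x_0}$, which the paper never states. Both arguments use the hypotheses in the same essential places: $z(t)\to\infty$ to make the Paley--Zygmund ratio tend to~$1$, and $|x_0(t)|<\tfrac{1}{4\beta}\log t$ with $z(t)=o(\log t)$ to ensure that the ``non-hitting'' errors are negligible compared with $e^{-\beta|x_0(t)|-\beta z(t)}\ge t^{-1/4-o(1)}$.
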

\begin{proof}
Let $T_0$ be the first time the initial particle of the branching process (started from $x_0$) hits the origin.We fix $\alpha \in (0, \frac{1}{2})$ and write 
\begin{align}
\label{alpha}
P^{x_0(t)} \Big( R_t \leq \frac{\beta}{2}t + z(t) \Big) = &P^{x_0(t)} \Big( R_t \leq \frac{\beta}{2}t + z(t) , \ T_0 \leq \alpha t \Big) \nonumber \\ 
+ &P^{x_0(t)} \Big( R_t \leq \frac{\beta}{2}t + z(t) , \ T_0 > \alpha t \Big)
\end{align}
(the choice of $\alpha$ will become clear later in the proof). Then the first term of \eqref{alpha} can be written as 
\[
P^{x_0(t)} \Big( R_t \leq \frac{\beta}{2}t + z(t) , \ T_0 \leq \alpha t \Big) = 
P^{x_0(t)} \Big( \tilde{R}_{t - T_0} \leq \frac{\beta}{2}t + z(t) , \ T_0 \leq \alpha t \Big) \text{,}
\]
where $\tilde{R}_t := R_{t + T_0}$, $t \geq 0$ is the position of the rightmost particle of the subtree of the original branching process 
started from the origin at time $T_0$. Then conditioning on $T_0$ and using the strong Markov property we get 
\begin{align}
\label{integral}
P^{x_0(t)} \Big( \tilde{R}_{t - T_0} \leq \frac{\beta}{2}t + z(t) , \ T_0 \leq \alpha t \Big) = 
&E^{x_0(t)} \Big[ E^{x_0(t)} \Big( \mathbf{1}_{\{ \tilde{R}_{t - T_0} \leq \frac{\beta}{2}t + z(t) \}} 
\mathbf{1}_{T_0 \leq \alpha t} \ \big\vert \ T_0 \Big) \Big] \nonumber \\
= &\int_0^{\alpha t} P \Big( R_{t-s} \leq \frac{\beta}{2}t + z(t)\Big) f_{T_0}(s) \mathrm{d}s \text{ ,} 
\end{align}
where $f_{T_0}(s) = \frac{|x_0|}{\sqrt{2 \pi s^3}} \mathrm{e}^{- \frac{x_0^2}{2s}}$ is the probability density of $T_0$.

\underline{Lower bound.}

We first prove the lower bound of \eqref{ineq_main}. From \eqref{alpha} and \eqref{integral} we have 
\[
P^{x_0(t)} \Big( R_t \leq \frac{\beta}{2}t + z(t) \Big) \geq \int_0^{\alpha t} P \Big( R_{t-s} \leq \frac{\beta}{2}t + z(t)\Big) f_{T_0}(s) \mathrm{d}s \text{.}
\]
Then from Corollary \ref{rightmost_inequality} we know that for all $t$ sufficiently large (so that $t + \frac{2}{\beta} z(t) > 0$)
\begin{align*}
\int_0^{\alpha t} P \Big( R_{t-s} \leq \frac{\beta}{2}t + z(t)\Big) f_{T_0}(s) \mathrm{d}s \geq 
&\int_0^{\alpha t} \Big( 1 - \exp \big\{ - \beta \big( \frac{\beta}{2}s + z(t)\big) \big\} \Big) f_{T_0}(s) \mathrm{d}s \\
= &P^{x_0(t)}(T_0 \leq \alpha t) - \mathrm{e}^{- z(t)} E^{x_0(t)} \Big( \mathrm{e}^{- \frac{\beta^2}{2} T_0} 
\mathbf{1}_{\{T_0 \leq \alpha t\}} \Big) \\
\geq &P^{x_0(t)}(T_0 \leq \alpha t) - \mathrm{e}^{- z(t)} E^{x_0(t)} \Big( \mathrm{e}^{- \frac{\beta^2}{2} T_0} \Big) \\
= &1 - \mathrm{e}^{- \beta z(t) - \beta |x_0(t)|} - P^{x_0(t)}(T_0 > \alpha t)
\end{align*}
using the fact that $E^{x_0(t)} \mathrm{e}^{- \frac{\beta^2}{2} T_0} = \mathrm{e}^{- \beta |x_0|}$. Then since 
$P^{x_0(t)}(T_0 > \alpha t) \leq \frac{\sqrt{2}|x_0(t)|}{\sqrt{\pi \alpha t}}$ it follows that 
\[
P^{x_0(t)} \Big( R_t \leq \frac{\beta}{2}t + z(t) \Big) \geq 1 - \theta_1(t) \mathrm{e}^{- \beta |x_0(t)| - \beta z(t)} \text{,}
\]
where $\theta_1(t) = 1 + P^{x_0(t)}(T_0 > \alpha t) \mathrm{e}^{ \beta |x_0(t)| + \beta z(t)} \to 1$ as $t \to \infty$ due to assumption that 
$|x_0(t)| < \frac{1}{4 \beta} \log t$ for large enough $t$ and $z(t) = o(\log t)$ as $t \to \infty$.

\underline{Upper bound.}

The upper bound of \ref{ineq_main} is proved similarly. From \eqref{alpha} and \eqref{integral} we have 
\[
P^{x_0(t)} \Big( R_t \leq \frac{\beta}{2}t + z(t) \Big) \leq \int_0^{\alpha t} P \Big( R_{t-s} \leq \frac{\beta}{2}t + z(t)\Big) f_{T_0}(s) \mathrm{d}s + 
P^{x_0(t)} \Big( T_0 > \alpha t \Big) \text{.}
\]
From Corollary \ref{rightmost_inequality} we know that for all $t$ sufficiently large (so that $t + \frac{2}{\beta} z(t) > 0$) 
\begin{align*}
\int_0^{\alpha t} P \Big( R_{t-s} \leq \frac{\beta}{2}t + z(t)\Big) f_{T_0}(s) \mathrm{d}s 
\leq &\int_0^{\alpha t} \Big[ 1 - \exp \big\{ - \frac{\beta^2}{2}s - \beta z(t) \big\} \Big( 1 - \\ 
C \exp \big\{&- \frac{\beta^2}{2}s - \beta z(t) \big\} \Big) \Phi \Big( \frac{\beta}{2} \sqrt{t - s} - \frac{\frac{\beta}{2}s + z(t)}{\sqrt{t - s}} \Big)^2 \Big] f_{T_0}(s) \mathrm{d}s \\
\leq &\int_0^{\alpha t} \Big[ 1 - \exp \big\{ - \frac{\beta^2}{2}s - \beta z(t) \big\} \Big( 1 - \\ 
C \exp \big\{- \beta z(t) \big\} \Big) &\Phi \Big( \frac{\beta}{2} \sqrt{1 - \alpha} \sqrt{t} - \frac{\beta \alpha}{2 \sqrt{1- \alpha} } \sqrt{t}
- \frac{z(t)}{\sqrt{1 - \alpha} \sqrt{t}} \Big)^2 \Big] f_{T_0}(s) \mathrm{d}s \\
\leq &1 - \hat{\theta}_2(t) \exp \{- \beta z(t) \} \int_0^{\alpha t} \mathrm{e}^{- \frac{\beta^2}{2}s} f_{T_0}(s) \mathrm{d}s \text{ ,}
\end{align*}
where 
\[
\hat{\theta}_2(t) = \Big( 1 - C \exp \big\{- \beta z(t) \big\} \Big) \Phi \Big( \frac{\beta}{2} \sqrt{1 - \alpha} \sqrt{t} - \frac{\beta \alpha}{2 \sqrt{1- \alpha} } \sqrt{t}
- \frac{z(t)}{\sqrt{1 - \alpha} \sqrt{t}} \Big)^2 \to 1
\]
as $t \to \infty$ due to the choice of $\alpha < \frac{1}{2}$. Thus 
\begin{align*}
P^{x_0(t)} \Big( R_t \leq \frac{\beta}{2}t + z(t) \Big) \leq &1 - \hat{\theta}_2(t) \exp \{- \beta z(t) \} \int_0^{\alpha t} \mathrm{e}^{- \frac{\beta^2}{2}s} f_{T_0}(s) \mathrm{d}s 
+ P^{x_0(t)} \Big( T_0 > \alpha t \Big)\\
= &1 - \hat{\theta}_2(t) \exp \{- \beta z(t) \} \Big[ E^{x_0(t)} \mathrm{e}^{- \frac{\beta^2}{2}T_0} - 
E^{x_0(t)} \Big( \mathbf{1}_{ \{ T_0 > \alpha t \} } \mathrm{e}^{- \frac{\beta^2}{2}T_0} \Big) \Big] \\ 
&\qquad\qquad\qquad\qquad + P^{x_0(t)} \Big( T_0 > \alpha t \Big)\\
\leq &1 - \hat{\theta}_2(t) \exp \{- \beta z(t) \} \Big[ E^{x_0(t)} \mathrm{e}^{- \frac{\beta^2}{2}T_0} - 
P^{x_0(t)} \Big( T_0 > \alpha t \Big) \Big] \\ 
&\qquad\qquad\qquad\qquad + P^{x_0(t)} \Big( T_0 > \alpha t \Big)\\
= &1 - \theta_2(t) \mathrm{e}^{- \beta |x_0(t)| - \beta z(t)} \text{ ,}
\end{align*}
where 
\[
\theta_2(t) = \hat{\theta}_2(t) \Big( 1 - \mathrm{e}^{\beta |x_0(t)|} P^{x_0(t)} \big( T_0 > \alpha t \big) \Big) - 
\mathrm{e}^{\beta |x_0(t)| + \beta z(t)} P^{x_0(t)} \big( T_0 > \alpha t \big) \to 1
\]
as $t \to \infty$ since $P^{x_0(t)}(T_0 > \alpha t) \leq \frac{\sqrt{2}|x_0(t)|}{\sqrt{\pi \alpha t}}$, $|x_0(t)| < \frac{1}{4 \beta} \log t$ for large enough $t$ 
and $z(t) = o(\log t)$ as $t \to \infty$ and this completes the proof of Proposition \ref{prop_main}.
\end{proof}

Let us now prove the main result of this paper.
\begin{proof}[Proof of Theorem \ref{main}]

Note that for any $t > 0$ and $s < t$ using the Markov property we can write
\begin{align*}
P \Big( R_t \leq \frac{\beta}{2}t + y \Big) &= E \Big( \prod_{u \in N_s} P^{X^u_s} \big( R_{t - s} \leq \frac{\beta}{2}t + y \big) \Big)\\
&= E \Big( \prod_{u \in N_s} P^{X^u_s} \big( R_{t - s} \leq \frac{\beta}{2}(t - s) + \frac{\beta}{2}s + y \big) \Big) \text{.}
\end{align*}
We take $s(t) = \sqrt{\log t}$ in the above formula so that conditions of Proposition \ref{prop_main} will apply. Then fixing an arbitrary $\epsilon > 0$ we write 
\begin{align*}
&\prod_{u \in N_{s(t)}} P^{X^u_{s(t)}} \Big( R_{t - s(t)} \leq \frac{\beta}{2}(t - s(t)) + \frac{\beta}{2}s(t) + y \Big) \\
= &\mathbf{1}_{\{ R_{s(t)} \leq (\frac{\beta}{2} + \epsilon)s(t)\}} \prod_{u \in N_{s(t)}} P^{X^u_{s(t)}} \Big( R_{t - s(t)} \leq \frac{\beta}{2}(t - s(t)) + \frac{\beta}{2}s(t) + y \Big) \\
+ &\mathbf{1}_{\{ R_{s(t)} > (\frac{\beta}{2} + \epsilon)s(t)\}} \prod_{u \in N_{s(t)}} P^{X^u_{s(t)}} \Big( R_{t - s(t)} \leq \frac{\beta}{2}(t - s(t)) + \frac{\beta}{2}s(t) + y \Big) \\
\end{align*}
and as we know from \eqref{as_limit}, $\mathbf{1}_{\{ R_{s(t)} > (\frac{\beta}{2} + \epsilon)s(t)\}} \to 0$
almost surely and hence also
\[
\mathbf{1}_{\{ R_{s(t)} > (\frac{\beta}{2} + \epsilon)s(t)\}} \prod_{u \in N_{s(t)}} P^{X^u_{s(t)}} \Big( R_{t - s(t)} \leq \frac{\beta}{2}(t - s(t)) + \frac{\beta}{2}s(t) + y \Big) 
\leq \mathbf{1}_{\{ R_{s(t)} > (\frac{\beta}{2} + \epsilon)s(t)\}} \to 0
\]
almost surely as $t \to \infty$. On the other hand, we would like to show that on the event $\{ R_{s(t)} \leq (\frac{\beta}{2} + \epsilon)s(t)\}$ 
\begin{equation}
\label{aim}
\prod_{u \in N_{s(t)}} P^{X^u_{s(t)}} \Big( R_{t - s(t)} \leq \frac{\beta}{2}(t - s(t)) + \frac{\beta}{2}s(t) + y \Big) \to \exp \{ - \mathrm{e}^{- \beta y} M_{\infty} \}
\end{equation}
\underline{Upper bound of \eqref{aim}.}
On the event $\{ R_{s(t)} \leq (\frac{\beta}{2} + \epsilon)s(t)\}$ we have 
\begin{align*}
&\prod_{u \in N_{s(t)}} P^{X^u_{s(t)}} \Big( R_{t - s(t)} \leq \frac{\beta}{2}(t - s(t)) + \frac{\beta}{2}s(t) + y \Big) \\
= &\exp \Big\{ \sum_{u \in N_{s(t)}} \log P^{X^u_{s(t)}} \Big( R_{t - s(t)} \leq \frac{\beta}{2}(t - s(t)) + \frac{\beta}{2}s(t) + y \Big) \Big\} \\
\leq &\exp \Big\{ \sum_{u \in N_{s(t)}} \log \Big( 1 - \theta_2\ \big(t - s(t) \big) \mathrm{e}^{- \beta |X^u_{s(t)}| - \frac{\beta^2}{2}s(t) - \beta y} \Big) \Big\}
\end{align*}
for $t$ large enough and where $\theta_2(t) \to 1$ as $t \to \infty$ according to Proposition \ref{prop_main}. Then since $\log(1 - x) \leq -x$ for all 
$x \in \mathbb{R}$ we get (on the event $\{ R_{s(t)} \leq (\frac{\beta}{2} + \epsilon)s(t)\}$)
\begin{align*}
&\prod_{u \in N_{s(t)}} P^{X^u_{s(t)}} \Big( R_{t - s(t)} \leq \frac{\beta}{2}(t - s(t)) + \frac{\beta}{2}s(t) + y \Big) \\
\leq &\exp \Big\{ - \theta_2\big(t - s(t)\big) \mathrm{e}^{- \beta y} \sum_{u \in N_{s(t)}} \mathrm{e}^{- \beta |X^u_{s(t)}| - \frac{\beta^2}{2}s(t)}  \Big\} \\
= &\exp \Big\{ - \theta_2\big(t - s(t)\big) \mathrm{e}^{- \beta y} M_{s(t)} \Big\} \\
\to &\exp \Big\{ - \mathrm{e}^{- \beta y} M_{\infty}\Big\} \text{.}
\end{align*}
\underline{Lower bound of \eqref{aim}.}  
Similarly, on the event $\{ R_{s(t)} \leq (\frac{\beta}{2} + \epsilon)s(t)\}$ we have 
\begin{align*}
&\prod_{u \in N_{s(t)}} P^{X^u_{s(t)}} \Big( R_{t - s(t)} \leq \frac{\beta}{2}(t - s(t)) + \frac{\beta}{2}s(t) + y \Big) \\
\geq &\exp \Big\{ \sum_{u \in N_{s(t)}} \log \Big( 1 - \theta_1\ \big(t - s(t) \big) \mathrm{e}^{- \beta |X^u_{s(t)}| - \frac{\beta^2}{2}s(t) - \beta y} \Big) \Big\}
\end{align*}
for $t$ large enough and where $\theta_1(t) \to 1$ as $t \to \infty$. Then since $\log (1 - x) \geq \frac{\log(1 - x_\ast)}{x_\ast} x$ for all $x \in [0, x_\ast]$ we get 
by taking $x = \theta_1\ \big(t - s(t) \big) \mathrm{e}^{- \beta |X^u_{s(t)}| - \frac{\beta^2}{2}s(t) - \beta y}$ and 
$x_\ast = \theta_1\ \big(t - s(t) \big) \mathrm{e}^{ - \frac{\beta^2}{2}s(t) - \beta y}$ that 
\begin{align*}
&\prod_{u \in N_{s(t)}} P^{X^u_{s(t)}} \Big( R_{t - s(t)} \leq \frac{\beta}{2}(t - s(t)) + \frac{\beta}{2}s(t) + y \Big) \\
\geq &\exp \Big\{ \frac{\log \Big( 1 - \theta_1 \big( t - s(t) \big) \mathrm{e}^{- \frac{\beta^2}{2}s(t) - \beta y} \Big)}{\theta_1 \big( t - s(t) \big) 
\mathrm{e}^{- \frac{\beta^2}{2}s(t) - \beta y}} \theta_1\big(t - s(t)\big) \mathrm{e}^{- \beta y} \sum_{u \in N_{s(t)}} \mathrm{e}^{- \beta |X^u_{s(t)}| - \frac{\beta^2}{2}s(t)}  \Big\} \\
\to &\exp \Big\{ - \mathrm{e}^{- \beta y} M_{\infty}\Big\} 
\end{align*}
using L'Hopitale's rule when taking the limit of the fraction. So we have proved that 
\[
\mathbf{1}_{\{ R_{s(t)} \leq (\frac{\beta}{2} + \epsilon)s(t)}\} \prod_{u \in N_{s(t)}} P^{X^u_{s(t)}} \Big( R_{t - s(t)} 
\leq \frac{\beta}{2}(t - s(t)) + \frac{\beta}{2}s(t) + y \Big) \to \exp \Big\{ - \mathrm{e}^{- \beta y} M_{\infty}\Big\} \text{.}
\]
Thus also
\[
\prod_{u \in N_{s(t)}} P^{X^u_{s(t)}} \Big( R_{t - s(t)} 
\leq \frac{\beta}{2}(t - s(t)) + \frac{\beta}{2}s(t) + y \Big) \to \exp \Big\{ - \mathrm{e}^{- \beta y} M_{\infty}\Big\} 
\]
and by bounded convergence
\[
E \Big[ \prod_{u \in N_{s(t)}} P^{X^u_{s(t)}} \Big( R_{t - s(t)} 
\leq \frac{\beta}{2}(t - s(t)) + \frac{\beta}{2}s(t) + y \Big) \Big] \to E \Big[ \exp \Big\{ - \mathrm{e}^{- \beta y} M_{\infty}\Big\} \Big] \text{.}
\]
For an arbitrary starting point of the branching process $x_0$ we have 
\begin{align*}
P^{x_0} \Big( R_t \leq \frac{\beta}{2}t + y \Big) = &P^{x_0} \Big( R_t \leq \frac{\beta}{2}t + y , \ T_0 > t\Big) + 
P^{x_0} \Big( R_t \leq \frac{\beta}{2}t + y , \ T_0 \leq t\Big)\\
= &P^{x_0} \Big( R_t \leq \frac{\beta}{2}t + y , \ T_0 > t\Big) + P^{x_0} \Big( \tilde{R}_{t - T_0} \leq \frac{\beta}{2}(t - T_0) + \frac{\beta}{2}T_0 + y , \ T_0 \leq t\Big)\\
\to \ &0 + E^{x_0} \Big( \exp \Big\{ - \mathrm{e}^{- \frac{\beta^2}{2}T_0 - \beta y} \tilde{M}_{\infty} \Big\} \Big) \\
= &E^{x_0} \Big( \exp \Big\{ - \mathrm{e}^{- \beta y} M_{\infty} \Big\} \Big) \text{,}
\end{align*}
where $T_0$ is the time the initial particle first hits the origin, $\tilde{R}_t = R_{t + T_0}$, $\tilde{M}_t = M_{t + T_0}$, $t \geq 0$.
\end{proof}

\bibliographystyle{acm}

\def\cprime{$'$}

\end{document}